\newcommand{\Z}{\mathbb{Z}}
\renewcommand{\phi}{\varphi}
\newcommand{\hang}{\,^a}
\newcommand\commentout[1]{}
\newtheorem{theorem}{Theorem}[section]
\newtheorem{proposition}[theorem]{Proposition}
\theoremstyle{remark}
\theoremstyle{definition}
\newtheorem{definition}[theorem]{Definition}
\begin{document}

\title{Complete intersection P-partition rings}

\author{Brian Davis}
\address{Department of Mathematics\\
         University of Kentucky\\
         Lexington, KY 40506--0027}
\email{brian.davis@uky.edu}

\date{\today}



\maketitle

\begin{abstract}We present an alternate proof of a result of F\'eray and Reiner characterizing posets whose $P$-partition rings are complete intersections. This shortened proof relates the complete intersection property to a simple structural property of a graph associated to $P$. 
\end{abstract}

\section{Introduction}
For a finite partially ordered set $P$, F\'{e}ray and Reiner \cite{PpartitionsRev} present a method for computing the Hilbert series of the ring of $P$-partitions, and consequently its number of linear extensions. Special attention is given to a class of posets that they call forests with duplication. We present a shortened proof of a result in their paper: that the forests with duplication are precisely those posets whose $P$-partition rings are complete intersections. We do this by showing (Proposition \ref{forestTwigs}) that a graph $\Gamma(P)$ associated to $P$ is a disjoint union of edges and isolated vertices if and only if $P$ is a forest with duplications. We then show (Theorem \ref{CIProposition}) that such graphs characterize the $P$-partition rings that are complete intersections.
\begin{definition} A (weak) $P$-partition is a map $f:P\rightarrow \Z_{\geq0}$ with the property that $x\preceq_P y$ implies that~${f(x)\geq f(y)}$. 
\end{definition}
It is easy to see that the sum of two $P$-partitions is again a $P$-partition, so that $P$-partitions form a semigroup. The collection of connected order ideals correspond to a minimal generating set for this semigroup, as described below.

If the cardinality of $P$ is $n$, then to each lower order ideal $J$ of $P$ we associate a $\{0,1\}$-vector in $\Z_{\geq0}^n$, given by \[\chi_{J}=\sum_{i\in J}e_i,\] where $e_i$ is the $i$'th standard basis vector. Note that each $\chi_{J}$ corresponds to a $P$-partition by $f(i)=(\chi_{J})_i$.  

\begin{proposition}\label{generatorsProp}
Each $P$-partition $f$ can be considered as a vector in $\Z_{\geq0}^n$, by \[f=\sum_{i=1}^nf(i)e_i,\] and can be written as a sum of vectors $\chi_{J}$.
\end{proposition}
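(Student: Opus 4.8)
The plan is to decompose $f$ via its superlevel sets. Since $P$ is finite, $f$ attains a maximum value $M = \max_{i \in P} f(i)$, and for each integer $t$ with $1 \le t \le M$ I would define the set
\[
J_t = \{\, i \in P : f(i) \ge t \,\}.
\]
The goal is then to recognize each $J_t$ as a lower order ideal and to check that the vectors $\chi_{J_t}$ sum to $f$.

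The key step is verifying that each $J_t$ is a lower order ideal of $P$. Suppose $y \in J_t$ and $x \preceq_P y$. Then the defining property of a $P$-partition gives $f(x) \ge f(y) \ge t$, so $x \in J_t$. Hence $J_t$ is downward closed, i.e. a lower order ideal, and the associated $\{0,1\}$-vector $\chi_{J_t}$ is available as a summand.

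With this established, I would verify the identity $f = \sum_{t=1}^{M} \chi_{J_t}$ coordinatewise. Fixing $i \in P$, the $i$-th coordinate of $\chi_{J_t}$ equals $1$ exactly when $f(i) \ge t$ and equals $0$ otherwise; summing over $t$ from $1$ to $M$ therefore counts the number of thresholds $t$ with $f(i) \ge t$, which is precisely $f(i)$. This exhibits $f$ as the sum $\sum_{t=1}^M \chi_{J_t}$ of characteristic vectors of lower order ideals, as claimed.

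I do not expect a serious obstacle here. The only substantive point is the observation that superlevel sets are order ideals, which follows immediately from the order-reversing condition defining a $P$-partition; the remaining identity is a routine ``layer-cake'' counting argument. (I note that the $J_t$ produced this way need not be connected, but the statement only asks for a decomposition into $\chi_{J}$ over lower order ideals $J$, so no refinement into connected ideals is required.)
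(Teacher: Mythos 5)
Your proof is correct and is essentially the paper's argument in closed form: the paper peels off the support $\{x : f(x) \neq 0\}$ and iterates, which produces exactly your superlevel sets $J_t$ one layer at a time. Both hinge on the same observation that these sets are order ideals by the order-reversing condition.
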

\begin{proof}
Observe that the subset of $P$ consisting of elements $x$ such that $f(x)\neq0$ is an ideal $J$ of $P$, and that $f-\chi_J$ is again a $P$-partition. Iterating gives a decomposition of $f$ into indicator vectors $\chi_{J}$ of ideals of $P$.
 \end{proof}

F\'{e}ray and Reiner call the associated semigroup ring the ring $R_P$ of $P$-partitions. Let $K$ be a field and $S=K[U_{J_1},\dots,U_{J_n}]$, where $U_{J_i}$ is a variable associated to the connected order ideal $J_i$ of $P$. Under the multiplicative map $\deg(\cdot)$ that sends the variable $U_{J_i}$ to $\chi_{J_i}$, a monomial of $S$ corresponds (non-uniquely) to a $P$-partition. Proposition \ref{generatorsProp} shows that the degree map is surjective, and so we can describe the ring of $P$-partitions as a quotient of $S$ by the binomial ideal $I_P$ generated by differences of monomials in fibers of $\deg(\cdot)$, i.e, $R_P$ has a presentation 
\[0\rightarrow I_P\longrightarrow S \longrightarrow R_P\rightarrow 0.\]

We can give explicit generators for $I_P$. For each pair $J_1$ and~$J_2$ of connected order ideals of $P$, let \[\left\{J_1\cap J_2^{(i)}\;:\;i=1,\dots,k\right\}\] be the set of connected components of the Hasse diagram of $J_1\cap J_2$. If $J_1$ and $J_2$ have non-trivial intersection, i.e., neither ideal contains the other and the intersection is non-empty, then 
\begin{equation}\label{generators}U_{J_1}U_{J_2}-U_{J_1\cup J_2}\cdot U_{J_1\cap J_2^{(1)}}\cdots U_{J_1\cap J_2^{(k)}}\end{equation} is a generator of $I_P$, and the set of all such terms generates $I_P$.

Recall that a ring $K[U_1,\dots,U_n]/I$ is a complete intersection when, for $I=(f_1,\dots,f_s)$, we have that $\overline{f_{i+1}}$ is not a zero divisor in $K[U_1,\dots,U_n]/(f_1,\dots,f_{i})$ for all $i$. A useful characterization of complete intersections is that the Krull dimension of  $K[U_1,\dots,U_n]/(f_1,\dots,f_s)$ is $n-s$.
\section{Forests with duplication}
F\'{e}ray and Reiner introduced forests with duplication and showed that they are precisely the posets whose $P$-partition rings are complete intersections. 
\begin{definition}
A forest with duplications is a poset formed from one element posets under the following three operations:
\begin{enumerate}[$(i)$]
\item Disjoint Union: Given posets $P$ and $Q$, union the sets and relations to form $P\oplus Q$.
\item Hanging: Given posets $P$ and $Q$, select an element $a$ in $P$ and introduce relations $q\prec a$ for each element $q$ of $Q$, then close transitively to form $P\hang Q$.
\item Duplication: Given a poset $P=Q_1\hang \,Q_2$ with $a$ minimal (not necessarily $\hat{0}$) in $Q_1$, introduce the new element $a'$, with relations $q\preceq a'$ for $q$ in $Q_2$ and $a'\preceq q$ for $q$ in $\left(Q_1\right)_{\succ a}$ to form the poset $P^{(a)}$.
\end{enumerate}
\end{definition}
\begin{center}
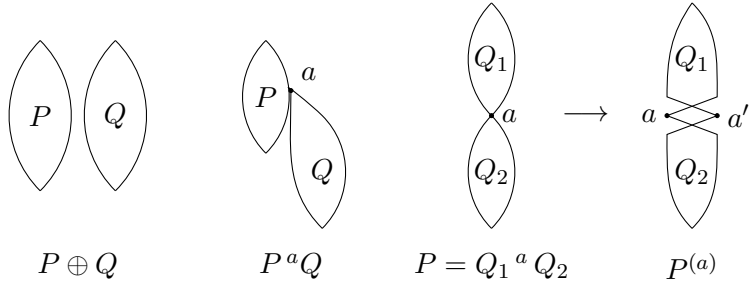
\begin{figure}[h]
\begin{tikzpicture}[scale=1]

\draw (0,0) to [out=45,in=-45] (0,2);
\draw (0,0) to [out=135,in=-135] (0,2);
\node at (0,1) {$P$};
\draw (1,0) to [out=45,in=-45] (1,2);
\draw (1,0) to [out=135,in=-135] (1,2);
\node at (1,1) {$Q$};
\node at (1/2,-1) {$P\oplus Q$};
\draw (3,1/2) to [out=45,in=-45] (3,2);
\draw (3,1/2) to [out=135,in=-135] (3,2);
\node at (3,5/4) {$P$};
\filldraw (3+1/3,4/3) circle (.75pt);
\node at  (3+1/3,4/3) [above right] {$a$};
\draw (4-1/4,0-1/2) to [out=45,in=-45] (4-1/4,1.5-1/2) to [out=135] (3+1/3,4/3);
\node at (4-1/4,3/4-1/2) {$Q$};
\draw (4-1/4,0-1/2) to [out=135,in=-90] (3+1/3,4/3);
\node at (3+1/3,-1) {$P\hang Q$};
\draw (6,1) to [out=45,in=-45] (6,5/2);
\draw (6,1) to [out=135,in=-135] (6,5/2);
\node at (6,7/4) {$Q_1$};
\draw (6,-1/2) to [out=45,in=-45] (6,1);
\draw (6,-1/2) to [out=135,in=-135] (6,1);
\node at (6,1/4) {$Q_2$};
\filldraw (6,1) circle (.75pt);
\node at  (6,1) [right] {$a$};
\node at (6,-1) {$P=Q_1\hang \,Q_2$};

\node at (7+1/4,1) {$\longrightarrow$};
\node at (8+2/3,-1) {$P^{(a)}$};
\node at (8+2/3,7/4) {$Q_1$};
\node at (8+2/3,1/4) {$Q_2$};

\draw (8+2/3,5/2) to [out=-45,in=90] (9,1+1/4) to (8+1/3,1);
\draw (8+2/3,5/2) to [out=235,in=90] (8+1/3,1+1/4) to (9,1);
\draw (8+2/3,-1/2) to [out=45,in=-90] (9,3/4) to (8+1/3,1);
\draw (8+2/3,-1/2) to [out=135,in=-90] (8+1/3,3/4) to (9,1);

\filldraw (9,1) circle (.75pt);
\node at  (9,1) [right] {$a'$};
\filldraw (8+1/3,1) circle (.75pt);
\node at  (8+1/3,1) [left] {$a$};

\end{tikzpicture}
\caption{The Operations}
\end{figure}
\end{center}

\begin{definition}
We define the graph $\Gamma(P)$ associated to a poset $P$ as having vertex set given by the connected order ideals $J_i$ of $P$ and edge set $E(\Gamma(P))$ given by the pairs of connected order ideals that intersect non-trivially. We write $\Gamma$ when the poset $P$ is clear.
\end{definition}
The edge ideal of the graph $\Gamma$ on $m$ vertices is the ideal \[I_\Gamma=\langle \,U_iU_j\,:\, \{i,j\}\in E(\Gamma)\,\rangle\;\subset \;K[U_1,\dots, U_m].\]

There exist term orders (\cite{PpartitionsRev} Theorem 1.4) such that the monomial ideal of $S$ generated by initial terms of generators of $I_P$ is precisely the edge ideal $I_\Gamma$ of the graph $\Gamma$ (under the obvious identification of variables $U_{J_i}$ and $U_i$).

\begin{proposition}\label{forestTwigs} A poset $P$ is a forest with duplication if and only if $\Gamma$ is a disjoint union of isolated edges and vertices.
\begin{proof} 
To show the forward direction, we prove that the three operations preserve the graph property. Note that the graph $\Gamma_*$ for the one element poset is a single vertex with no edges. 

Clearly, the vertex set of $\Gamma_{P\oplus Q}$ is the disjoint union of the vertex sets of $\Gamma_P$ and $\Gamma_Q$. If two ideals intersect non-trivially in $P\oplus Q$, then both ideals are contained in either $P$ or $Q$, so that the corresponding edge is internal to $\Gamma_P$ or $\Gamma_Q$. It follows that $\Gamma_{P\oplus Q}=\Gamma_P\oplus\Gamma_Q$.

Similarly, the vertex set of $\Gamma_{P\hang Q}$ is the disjoint union (under a slight abuse of notation, where for $J$ containing $a$ we identify the ideal $J\subseteq P$ with $J\cup Q$) of the vertex sets of $\Gamma_P$ and $\Gamma_Q$, since any connected order ideal of $P\hang Q$ intersecting $Q$ either is contained in or contains $Q$. Again edges of $\Gamma_{P\hang Q}$ are internal to $\Gamma_P$ and $\Gamma_Q$, so that $\Gamma_{P\hang Q}=\Gamma_P\oplus \Gamma_Q$.

The vertex set of $\Gamma_{P^{(a)}}$ is the vertex set of $\Gamma_P$ together with a vertex associated to the principal ideal $(a')\subset P^{(a)}$. The only ideal intersecting $(a')$ non-trivially is $(a)$, so that the edge set of $\Gamma_{P^{(a)}}$ is the edge set of $\Gamma_{P}$ together with the edge with endpoints $(a)$ and $(a')$. By construction, no ideal of $P$ intersects non-trivially with $(a)$, and so the degree of the vertex $(a)$ is exactly one.

To prove the other direction, we assume that $\widetilde{P}$ has the graph property and is not the result of any of the three operations: disjoint union, hanging, or duplication. We then prove that $\widetilde{P}$ is the one element poset. 

Let $x$ be a minimal element of $\widetilde{P}$. If there exists $p$ covering $x$, then there exists $q\neq p$ covering $x$, since $x$ is not hanging. Let $(p)$ denote the lower order ideal generated by $p$, and $[p]$ denote the upper order ideal, i.e., $[p]=\{y\in\widetilde{P}\,:\, y\succeq p\}$. If $r\in[p]\backslash[q]$, then the ideal $(q)$ has degree at least two in $\Gamma_{\widetilde{P}}$, since it meets $(p)$ and $(r)$. Thus, by symmetry, $[p]\backslash \{p\}$=$[q]\backslash \{q\}$. 

Let $r\in(p)\backslash(q)$ and $s\in(r)\cap(q)$. Then again $(q)$ has degree at least two, since it meets $(r)$ and $(p)$. It follows that $(r)\cap(q)$ is empty. Since $(r)$ does not hang below $p$, there exists $s\in\widetilde{P}\backslash[p]$ and $t\in(r)\cap(s)$. But in this case, $(p)$ has degree at least two since it meets $(s)$ and $(q)$. Unraveling the hypotheses, we see that $(p)\backslash \{p\}=(q)\backslash \{q\}$. 

Since $(p)\backslash \{p\}=(q)\backslash \{q\}$ and $[p]\backslash \{p\}$=$[q]\backslash \{q\}$ together imply that $p$ and $q$ form a duplication pair, we see that no elements cover $x$. Since $\widetilde{P}$ is connected (it is not a disjoint union), it follows that $\widetilde{P}$ consists of the single element $x$. Thus if a poset $P$ has $\Gamma_{P}$ a disjoint union of edges and vertices, and $P$ is not the result of disjoint union, hanging, or duplication, then $P$ is the singleton poset.
\end{proof}
\end{proposition}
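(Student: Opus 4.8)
The plan is to first restate the graph condition in a more usable form: a graph is a disjoint union of isolated edges and vertices exactly when every vertex has degree at most one. So throughout I would aim to detect forbidden structure by exhibiting, whenever $P$ fails to be a forest with duplication, a single connected order ideal that meets two others non-trivially (a degree-two vertex), and conversely to show that the three building operations never create such a vertex.

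For the forward direction I would induct on the construction of $P$ from one-element posets, since forests with duplication are defined inductively. The base case is immediate: the one-element poset has $\Gamma$ a single vertex. For the inductive step I would compute the effect of each operation on the set of connected order ideals and their non-trivial intersections. For disjoint union the vertex set of $\Gamma_{P\oplus Q}$ is the disjoint union of those of $\Gamma_P$ and $\Gamma_Q$, and every non-trivial intersection forces both ideals into the same summand, so $\Gamma_{P\oplus Q}=\Gamma_P\oplus\Gamma_Q$ and the matching property is inherited. For hanging the key observation I would establish is that any connected order ideal of $P\hang Q$ meeting $Q$ either contains $Q$ or is contained in $Q$; this again rules out cross edges and gives $\Gamma_{P\hang Q}=\Gamma_P\oplus\Gamma_Q$. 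For duplication I would show that passing from $P$ to $P^{(a)}$ adds exactly one new vertex, the principal ideal $(a')$, and that the only ideal meeting $(a')$ non-trivially is $(a)$; checking that $(a)$ was previously an isolated vertex then shows the new component is a single edge, preserving the property.

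For the backward direction I would argue by induction on $|P|$, showing that a poset whose $\Gamma$ is a matching is decomposable by one of the three operations unless it is a single point. Equivalently, and this is how I would phrase it, I assume $P$ has the graph property but is \emph{not} the output of disjoint union, hanging, or duplication, and deduce $|P|=1$. If $P$ were disconnected it would be a disjoint union, so $P$ is connected; fix a minimal element $x$. If some element covers $x$, I would first argue $x$ has at least two distinct covers $p\ne q$, because a unique cover $p$ would let me recover $P$ as the hanging of $\{x\}$ below $p$, contradicting the assumption. With two incomparable covers $p,q$ in hand I would force their strict up-sets to coincide: if some $r\ge p$ were not $\ge q$ (and $r\ne p$), then $(q)$ would meet both $(p)$ and $(r)$ non-trivially and so have degree at least two, violating the matching property; by symmetry $[p]\setminus\{p\}=[q]\setminus\{q\}$.

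The main obstacle, and the step I would spend the most care on, is the matching analysis of the down-sets, namely proving $(p)\setminus\{p\}=(q)\setminus\{q\}$. Here the degree argument is less automatic, because an element $r<p$ with $r\not\le q$ need not be comparable to $x$, so I would combine two ingredients: the degree bound (any such $r$ with $(r)\cap(q)\ne\emptyset$ would again make $(q)$ have degree two, forcing $(r)\cap(q)=\emptyset$) together with the failure of a hanging decomposition (which produces, for such an $r$, an element $s\notin[p]$ meeting $(r)$ and hence a second edge at $(p)$). Unraveling these constraints should collapse $(p)\setminus(q)$ to $\{p\}$, and symmetrically, giving equal down-sets. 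Once both the up-sets and the down-sets of $p$ and $q$ agree, I would verify directly against the definition that $p,q$ constitute a duplication pair—exhibiting $P$ in the form $Q_1\hang Q_2$ with $p$ minimal in $Q_1$—so that $P$ is a duplication, the final contradiction. Hence $x$ has no cover, and connectedness forces $P=\{x\}$.
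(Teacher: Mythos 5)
Your proposal is correct and follows essentially the same route as the paper: the forward direction checks that each of the three operations preserves the degree-at-most-one property of $\Gamma$ (with the same key observations about hanging and about the new vertex $(a')$ in a duplication), and the backward direction is the same contrapositive argument producing two covers $p,q$ of a minimal element, forcing $[p]\setminus\{p\}=[q]\setminus\{q\}$ and $(p)\setminus\{p\}=(q)\setminus\{q\}$ via degree-two contradictions, and concluding that $p,q$ form a duplication pair. No substantive differences to report.
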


\begin{theorem}\label{CIProposition}A finite poset $P$ is a forest with duplications if and only if the ring $S/I_P$ is a complete intersection. \end{theorem}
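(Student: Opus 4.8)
The plan is to combine Proposition~\ref{forestTwigs} with the known connection between initial ideals and the edge ideal $I_\Gamma$, reducing the complete intersection question to a purely combinatorial statement about the graph $\Gamma$. By Proposition~\ref{forestTwigs}, $P$ is a forest with duplications precisely when $\Gamma$ is a disjoint union of isolated edges and isolated vertices, so it suffices to show that $S/I_P$ is a complete intersection if and only if $\Gamma$ has this form. I would route both implications through the initial ideal $I_\Gamma$, exploiting the standard fact that $S/I_P$ is a complete intersection if and only if its initial ideal defines a complete intersection—more precisely, that passing to an initial ideal preserves both Krull dimension and the minimal number of generators needed, so the numerical characterization $\dim = n - s$ can be checked on $I_\Gamma$ rather than on $I_P$ directly.

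First I would make the dimension count explicit for the edge ideal. For a graph $\Gamma$ on $m$ vertices, the Krull dimension of $K[U_1,\dots,U_m]/I_\Gamma$ equals the maximum size of an independent set (the vertex covers of $\Gamma$ correspond to the minimal primes, so the dimension is $m$ minus the minimum vertex cover size, i.e.\ the independence number $\alpha(\Gamma)$). The generators of $I_\Gamma$ are in bijection with the edges of $\Gamma$, so the number of generators is $|E(\Gamma)|$. Thus $S/I_\Gamma$ is a complete intersection in the numerical sense exactly when $\alpha(\Gamma) = m - |E(\Gamma)|$.

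The heart of the argument is then the combinatorial claim that, for any graph $\Gamma$ on $m$ vertices, one has $\alpha(\Gamma) = m - |E(\Gamma)|$ if and only if $\Gamma$ is a disjoint union of isolated edges and isolated vertices. The inequality $\alpha(\Gamma) \geq m - |E(\Gamma)|$ holds in general, since deleting one endpoint from each edge leaves an independent set of size at least $m - |E(\Gamma)|$; I would argue equality forces each connected component to have at most one edge. Indeed, any component that is not a single edge or a single vertex contains either a vertex of degree at least two or a cycle, and in either case a short counting argument shows that the independent set obtained by deleting endpoints of edges strictly exceeds $m - |E(\Gamma)|$, because edges sharing a vertex let us cover two edges by deleting a single vertex. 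Conversely, a disjoint union of $k$ isolated edges and $\ell$ isolated vertices has $m = 2k + \ell$, $|E| = k$, and $\alpha = k + \ell = m - |E|$, confirming equality.

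The main obstacle I anticipate is the transfer step: justifying that the complete intersection property for $S/I_P$ is equivalent to the corresponding numerical equality for the initial ideal $I_\Gamma$. One must verify that the initial terms of the given generators~\eqref{generators} are exactly the squarefree quadratic monomials generating $I_\Gamma$, that these generators are a minimal generating set (so that $s = |E(\Gamma)|$), and that the Gr\"obner degeneration preserves Krull dimension. The flat degeneration argument (dimension is constant along the flat family, and a complete intersection remains a complete intersection under taking initial ideals because the numerical criterion $\dim = n - s$ transfers) is the delicate point; once it is in place, Proposition~\ref{forestTwigs} and the graph-theoretic equality above close both directions immediately.
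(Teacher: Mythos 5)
Your proposal is correct and shares its overall architecture with the paper's proof: both invoke Proposition~\ref{forestTwigs} to reduce to a statement about $\Gamma$, and both transfer the complete intersection question from $I_P$ to the edge ideal $I_\Gamma$ via the Gr\"obner degeneration, using that the degeneration preserves Krull dimension and that the two ideals have generating sets of the same cardinality (a point the paper simply asserts and you rightly flag as the step needing care). Where you diverge is in deciding when $S/I_\Gamma$ is a complete intersection. The paper argues directly: if some vertex $J_1$ is adjacent to both $J_2$ and $J_3$, then $U_{J_1}U_{J_3}$ is a zero divisor modulo $U_{J_1}U_{J_2}$, so the generators cannot form a regular sequence; and if every vertex has degree at most one, $S/I_\Gamma$ factors as a tensor product of polynomial rings and hypersurfaces $K[U_i,U_j]/(U_iU_j)$, whose codimension visibly equals the number of generators. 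You instead make the numerical criterion fully explicit: $\dim S/I_\Gamma$ equals the independence number $\alpha(\Gamma)$ (minimal primes of an edge ideal being generated by minimal vertex covers), the minimal number of generators is $|E(\Gamma)|$, and the identity $\alpha(\Gamma)=m-|E(\Gamma)|$ holds precisely for graphs of maximum degree at most one --- since a vertex of degree two or more yields a vertex cover of size strictly less than $|E(\Gamma)|$. Your route is uniform (one dimension count handles both directions) and isolates the graph theory cleanly, at the cost of importing the standard correspondence between minimal primes of edge ideals and vertex covers; the paper's route is more elementary and self-contained but treats the two directions by separate ad hoc arguments. Both are valid proofs of the theorem.
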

\begin{proof} From the preceding proposition, it is enough to show that $S/I_P$ is a complete intersection if and only if $\Gamma$ is a disjoint union of isolated edges and vertices.

The ideal $I_\Gamma$ is an initial ideal of $I_P$, with the consequence that $S/I_\Gamma$ and $S/I_P$ have the same Krull dimension. Since $I_\Gamma$ and $I_P$ have the same minimum number of generators, $S/I_P$ is a complete intersection if and only if $S/I_\Gamma$ is.

Let $\Gamma$ have vertex $J_1$ adjacent to vertices $J_2$ and $J_3$. Then the ring $S/I_\Gamma$ is not a complete intersection, since $U_{J_1}U_{J_3}$ is a zero divisor in $S/\left(U_{J_1}U_{J_2}\right)$. If $\Gamma$ has no vertices of degree greater than one, then $S/I_\Gamma$ can be written as a tensor product of univariate polynomial rings $K[U_J]$ corresponding to isolated vertices and quotient rings $K[U_{J_i},U_{J_j}]/(U_{J_i}U_{J_j})$ corresponding to edges $\{J_i,J_j\}$. The co-dimension of this ring is precisely the number of generators of $I_\Gamma$, hence $S/I_\Gamma$ is a complete intersection.
\end{proof}

\bibliographystyle{amsplain}
\bibliography{BrianBib}
\end{document}